\numberwithin{equation}{section}
\newtheorem{theorem}{Theorem}[section]
\newtheorem{proposition}{Proposition}
\newtheorem{physical conclusion}{Physical Conclusion}
\newtheorem{remark}{Remark}
\title{Dynamic bifurcation and instability of Dean problem}
\author{\scshape Huichao Wang \thanks{Email:huichao87@qq.com}
\\ \footnotesize Department of Mathematics
\\ \footnotesize Sichuan University
\\ \footnotesize Chengdu,
Sichuan 610064, China
\medskip
\\ \scshape   Quan Wang \thanks{Corresponding author:wqxihujunzi@126.com}
\\ \footnotesize Department of Mathematics
\\ \footnotesize Sichuan University
\\ \footnotesize Chengdu,
Sichuan 610064, China
\medskip
\\ \scshape   Ruikuan Liu\thanks{Email:liuruikuan2008@163.com. Supported by NSFC(11401479)}
\\ \footnotesize Department of Mathematics
\\ \footnotesize Sichuan University
\\ \footnotesize Chengdu,
Sichuan 610064, China }
\begin{document}
\date{}
\maketitle
\begin{abstract}
The main objective of this paper is to address the instability and dynamical bifurcation of the Dean problem. A nonlinear theory is obtained for the Dean problem, leading in particular to rigorous justifications of the linear theory used by physicists, and the vortex structure. The main technical tools are the dynamic bifurcation theory\cite{Ma} developed recently by Ma and Wang.
\begin{center}
\textbf{\normalsize keywords}
\end{center}
Dean problem; instability; dynamic bifurcation.
\end{abstract}

\section{Introduction}

The instability of rotating flows is an important issue in fluid dynamics. The  problem go back to the pioneering work of Rayleigh\cite{9}. He also considered a basic swirling flow of an inviscid fluid which moves with angular velocity $\Omega(r)$, an arbitrary function of the distance $r$ from the axis of rotation, which is determined
by the radial rate of change of pressure, then Rayleigh's criterion for stability is ``the
square of the circulation, $r^2\Omega^2$,  is an increasing function of.''  Synge has pointed out that Rayleigh's argument is inconclusive, and that a rigorous demonstration of stability can only be determined by considering the stability of the basic flow to arbitrary perturbations. Further, Synge showed that Rayleigh's criterion is valid for arbitrary perturbations which are symmetric with respect to the axis of rotation\cite{C}. However, Rayleigh's criterion can predicts instability for an inviscid rotating fluid. The stability of an inviscid fluid with respect to non-axisymmetric disturbance has also been studied by Bisshopp\cite{2}  and
Krueger and Diprima\cite{4}  in the narrow-gap approximation.

Note that the instability of viscous rotation flows also have been considered by several authors
including Taylor\cite{10} and Dean\cite{2}. In 1923, Taylor \cite{10} conducted a famous experiment, and observed and studied the stability of an incompressible viscous flow between two rotating coaxial cylinders. He found that when the Taylor number $T$ is smaller than a critical value $T_{c}>0$, called the critical Taylor number, the basic flow, called the Couette flow, is stable. While the Taylor number crosses the critical number, the Couette flow breaks into a cellular pattern that is radially symmetric. Such instability called Taylor instability. More research about Taylor instability see \cite{1,C,D,K,Y}. In 1928, Dean\cite{2} found that a similar type of instability can also occur when a viscous fluid in a curved channel owing to a pressure gradient acting round the channel. Such instability is called Dean instability, which has been considered again by H$\ddot{a}$mmerl\cite{3} and Walowit, Tsao and DiPrima \cite{11}.
For Couette flow, especially, in the case that the cylinders rotate in the same direction, a simple formula for predicting the critical speed is derived in \cite{11}.  The effect of a radial temperature gradient on the stability of Couette flow is also considered.

 Recently, Ma and Wang \cite{Ma0,Ma} have developed a bifurcation theory [9] for nonlinear partial differential equations, which has been used to develop a nonlinear analysis for the
 Rayleigh-B\'{e}nard convections \cite{Ma1,Ma2} and Taylor stability and bifurcation \cite{Ma3,Ma4}. This bifurcation theory is centered on a new notion
of bifurcation, called attractor bifurcation, for nonlinear evolution equations. In particular, based on this bifurcation theory, for the Rayleigh-B\'{e}nard convection
problem, Ma and Wang have shown \cite{Ma1,Ma2} that the problem bifurcates from the trivial solution to an attractor $\mathcal{A}_R$ when the Rayleigh number $R$ crosses the first critical Rayleigh number $R_c$ , for all physically sound boundary conditions and regardless of the multiplicity of the eigenvalue $R_c$ for the linear problem.
It is worth pointing out that the instability of viscous flows is the nonlinear phenomena.

Motivated by the above papers \cite{Ma1,Ma2,Ma3,Ma4}, our purpose of this article is to address the dynamic transition of the Dean problem under narrow gap approximation. The main
technical tool are the dynamical  bifurcation theory \cite{Ma0,Ma}. The basis of our research is a basic solution of Dean  problem as follows
\begin{equation*}
  u_{\theta}^{0}=\frac{1}{2\rho\nu}\bigg(\frac{\partial p}{\partial \theta}\bigg)_{0}(r\ln r+Ar+\frac{B}{r}),
\end{equation*}
where $A$ and $B$ are determined by boundary velocity. In this paper, we shall get the expressions of $A$ and $B$ by the physical assumption, which are different from expressions in \cite{2,3,11} where the boundary velocity vanishes.  Based on the basic solution determined by boundary condition and narrow gap approximation, we get a new simplified  governing model for
the Dean problem. In addition, the expressions of $A$ and $B$ guarantee that the linear part of
dynamical equation is symmetrical.

The article is organized as follows. In Section 2, we will give one type of boundary conditions for the steady state solution to guarantee that the corresponding  linear part is symmetrical.
The simplified model and the set-up are
given in Section 2, and the principle of exchange of stability(PES) is given in Section 3, and the main theorems are proved in Section 4.

\section{Simplified Governing equations}
\subsection{the basic flow}
The governing equations of a viscous incompressible fluid in the space between two concentric cylinders are the following equations in the cylindrical coordinates, which are given by
 \begin{eqnarray}
\label{21} \left\{
   \begin{aligned}
 &\frac{\partial u_{r}}{\partial t}+(u \cdot \nabla)u_{r}-\frac{u_{\theta}^{2}}{r}=\nu(\Delta u_{r}-\frac{u_{r}}{r^{2}}-\frac{2}{r^{2}}\frac{\partial u_{\theta}}{\partial \theta})-\frac{1}{\rho}\frac{\partial p}{\partial r},
 \\&\frac{\partial u_{\theta}}{\partial t}+(u \cdot \nabla)u_{\theta}+\frac{u_{r}u_{\theta}}{r}=\nu(\Delta u_{\theta}-\frac{u_{\theta}}{r^2}+\frac{2}{r^{2}}\frac{\partial u_{r}}{\partial \theta})-\frac{1}{r\rho}\frac{\partial p}{\partial \theta},
 \\& \frac{\partial u_{z}}{\partial t}+(u \cdot \nabla)u_{z}=\nu\Delta u_{z}-\frac{1}{\rho}\frac{\partial p}{\partial z},
 \\& \frac{\partial(ru_{r})}{\partial r}+\frac{\partial u_{\theta}}{\partial \theta}+\frac{\partial(ru_{z})}{\partial z}=0,
\end{aligned}
\right.
\end{eqnarray}
where $\nu$ is the kinematic viscosity, $\rho$ is the
density, $u=(u_{r},u_{\theta},u_{z})$ is the velocity field, $p$ is the pressure function, and
\begin{eqnarray*}
\begin{aligned}
  &(u \cdot \nabla)=u_{r}\frac{\partial}{\partial r}+\frac{u_{\theta}}{r}\frac{\partial}{\partial \theta}+u_{z}\frac{\partial}{\partial z},
  \\&\Delta=\frac{1}{r}\frac{\partial}{\partial r}\bigg(r\frac{\partial}{\partial r}\bigg)+\frac{1}{r^2}\frac{\partial^{2}}{\partial \theta^{2}}+\frac{\partial^{2}}{\partial z^{2}}.
\end{aligned}
\end{eqnarray*}

To simplify (\ref{21}), we make the following physical assumptions:
\begin{enumerate}
\item narrow gap approximation,
\begin{equation}\label{ng}
R_2-R_1\leq\frac{R_1+R_2}{2},
\end{equation}
where $R_1$ and $R_2$ are the radius of
of the two concentric cylinders.
\item the steady state solution of (\ref{21}) only related to the $\theta$-direction and
only depend on the variable $r$, i.e.,
\begin{equation}\label{bs}
u=(0,\overline{u}_\theta(r),0).
\end{equation}
\item the partial derivatives of pressure function $\widetilde{p}$  with respect to $\theta$ is
a constant, i.e.,
\begin{equation}\label{p}
\frac{\partial \overline{p}}{\partial \theta}=\bigg(\frac{\partial p}{\partial \theta}\bigg)_0=\text{const.}
\end{equation}
\end{enumerate}
From \cite{D}, the basic flow for (\ref{21}) is a steady state solution, defined by
\begin{equation}\label{bss}
\left\{
\begin{aligned}
& u_r=u_z=0,\quad u_\theta=\overline{u}_{\theta}(r)=\frac{1}{2\rho\nu}\bigg(\frac{\partial p}{\partial \theta}\bigg)_{0}(r\ln r+Ar+\frac{B}{r}),
\\ & \frac{1}{\rho}\frac{\partial \overline{p}}{\partial r}=\frac{u_{\theta}^{2}(r)}{r},\quad \frac{\partial \overline{p}}{\partial \theta}=\bigg(\frac{\partial p}{\partial \theta}\bigg)_{0}.
\end{aligned}
\right.
\end{equation}
\begin{remark}
In this paper, the basic flow (\ref{p}) we considered is different from the  Couette flow for
the Taylor problem.
\end{remark}
By the boundary conditions
\begin{equation}\label{bc}
\overline{u}_{\theta}(R_{1})=\frac{1}{2\rho\nu}\bigg(\frac{\partial p}{\partial \theta}\bigg)_{0}R_{2}^{2},\quad \overline{u}_{\theta}(R_{2})=\frac{1}{2\rho\nu}\bigg(\frac{\partial p}{\partial \theta}\bigg)_{0}R_{1}^{2}.
\end{equation}
We also assume that the pressure $p$ is a function depending on $\theta$ and $r$, satisfying that
\begin{equation}\label{p11}
\frac{1}{\rho}\frac{\partial p}{\partial r}=\frac{u_{\theta}^{2}(r)}{r},\quad \frac{\partial p}{\partial \theta}=\bigg(\frac{\partial p}{\partial \theta}\bigg)_{0}.
\end{equation}
Basing on the above assumptions, together (\ref{bss}) with (\ref{bc}), we get
\begin{align}
&\label{A} A=-\frac{R_{2}^{2}\ln R_{2}-R_{1}^{2}\ln R_{1}+R_{2}^{2}R_{1}-R_{1}^{2}R_{2}}{R_{2}^{2}-R_{1}^{2}},
\\& \label{B} B=\frac{R_{1}^{2}R_{2}^{2}(\ln R_{2}-\ln R_{1})+R_{2}^{4}R_{1}-R_{1}^{4}R_{2}}{R_{2}^{2}-R_{1}^{2}}.
\end{align}
\begin{remark}
Note that (\ref{A}), (\ref{B}) and the narrow gap approximation assumption (\ref{ng}) can
guarantee the linear part of the perturbation equation by (\ref{bss}) is symmetrical.
\end{remark}

\subsection{perturbed non-dimensionless equation}
In the following, in order to investigate the stability of the flow described by (\ref{bss}), we need to consider the perturbed state
\begin{equation*}
  u_r,~\overline{u}_\theta+u_\theta,~u_z,\quad p+\overline{p}.
\end{equation*}
Assume that the perturbations are axi-symmetric and independent of $\theta$, we derive from (\ref{21}) that
\begin{eqnarray}
\label{22} \left\{
   \begin{aligned}
 &\frac{\partial u_{r}}{\partial t}+(u \cdot \nabla)u_{r}-\frac{u_{\theta}^{2}}{r}=\nu\bigg(\Delta u_{r}-\frac{u_{r}}{r^{2}}\bigg)-\frac{1}{\rho}\frac{\partial p}{\partial r}
  \\& \qquad\qquad\qquad\qquad\qquad+\frac{1}{2\nu\rho}\bigg(\frac{\partial p}{\partial \theta}\bigg)_0\bigg(2\ln r +2A +\frac{2B}{r^{2}}\bigg)u_\theta,
 \\&\frac{\partial u_{\theta}}{\partial t}+(u \cdot \nabla)u_{\theta}+\frac{u_{r}u_{\theta}}{r}=\nu\bigg(\Delta u_{\theta}-\frac{u_{\theta}}{r^{2}}+\frac{1}{\nu\rho}\frac{1}{r}\bigg)
 \\& \qquad\qquad\qquad\qquad\qquad-\frac{1}{2\nu\rho}\bigg(\frac{\partial p}{\partial \theta}\bigg)_0(2\ln r +2A +1)u_{r},
 \\& \frac{\partial u_{z}}{\partial t}+(u \cdot \nabla)u_{z}=\nu\Delta u_{z}-\frac{1}{\rho}\frac{\partial p}{\partial z},
 \\& \frac{\partial}{\partial r}(ru_{r})+\frac{\partial}{\partial z}(ru_{z})=0,
\end{aligned}
\right.
\end{eqnarray}
where
\begin{eqnarray}
&\begin{aligned}
 \label{23} (u \cdot \nabla)= u_{r}\frac{\partial}{\partial r}+u_{z}\frac{\partial}{\partial z},
\end{aligned}
\\&\begin{aligned}
\label{24}  \Delta = \frac{\partial^{2}}{\partial r^{2}}+\frac{1}{r} \frac{\partial}{\partial r}+\frac{\partial^{2}}{\partial z^{2}}.
\end{aligned}
\end{eqnarray}
To derive the non-dimensionless form of equations (\ref{22}), let
\begin{eqnarray*}
\begin{aligned}
&(x,t)=\bigg(lx', \frac{l^{2}}{\nu}t'\bigg),\quad\quad\quad(x=(r,r\theta,z)),
\\&(u,p)=\bigg(\frac{\nu}{l}u', \frac{\rho\nu^{2}}{l^{2}}p'\bigg),\quad\quad(u=(u_r,u_\theta,u_z)).
\end{aligned}
\end{eqnarray*}
Omitting the primes, we obtain the non-dimensionless form of (\ref{22}) as follows
\begin{eqnarray}
\label{25} \left\{
   \begin{aligned}
 &\frac{\partial u_{r}}{\partial t}+(u \cdot \nabla)u_{r}-\frac{u_{\theta}^{2}}{r}=
 \Delta u_{r}-\frac{u_{r}}{r^{2}}-\frac{\partial p}{\partial r}
   +\bigg(\frac{\partial p}{\partial \theta}\bigg)_0\bigg(\ln r +A +\frac{B}{r^{2}}\bigg)u_{\theta},
 \\&\frac{\partial u_{\theta}}{\partial t}+(u \cdot \nabla)u_{\theta}+\frac{u_{r}u_{\theta}}{r}=\Delta u_{\theta}-\frac{u_{\theta}}{r^{2}}+\frac{2}{r}\bigg(\frac{\partial p}{\partial \theta}\bigg)_0
 \\& \qquad\qquad\qquad\qquad\qquad\qquad-\bigg(\frac{\partial p}{\partial \theta}\bigg)_0\bigg(\ln r +A +\frac{1}{2}\bigg)u_{r},
 \\& \frac{\partial u_{z}}{\partial t}+(u \cdot \nabla)u_{z}=\Delta u_{z}-\frac{\partial p}{\partial z},
 \\& \frac{\partial}{\partial r}(ru_{r})+\frac{\partial}{\partial z}(ru_{z})=0,
\end{aligned}
\right.
\end{eqnarray}

Taking the length scale $l=R_{2}-R_{1}$, then the narrow gap condition is given by
\begin{equation}\label{26}
  \frac{l}{R_{1}}\ll 1.
\end{equation}

Furthermore, by (\ref{26}) and (\ref{A}), we have
\begin{equation}\label{A1}
\begin{aligned}
&\ln r+A+\frac{1}{2}
\\&=\ln r-\frac{R_{2}^{2}\ln R_{2}-R_{1}^{2}\ln R_{1}+R_{2}^{2}R_{1}-R_{1}^{2}R_{2}}{R_{2}^{2}-R_{1}^{2}}+\frac{1}{2}
\\ &=\ln r-\ln R_2-\frac{R^2_1(\ln R_2-\ln R_1)}{R_{2}^{2}-R_{1}^{2}}
-\frac{R_1R_2}{R_1+R_2}+\frac{1}{2}
\\ &=\ln \frac{r}{R_2}-\frac{\ln (1+l/R_1)}{(1+l/R_1)^2-1}
-\frac{R_1R_2}{R_1+R_2}+\frac{1}{2}
\\ &\sim -\frac{l/R_1}{l/R_1(1+l/R_1)}-\frac{\ln (1+l/R_1)}{(1+l/R_1)^2-1}
-\frac{R_1R_2}{R_1+R_2}+\frac{1}{2}
\\ &\sim-\frac{R_1R_2}{R_1+R_2},
\end{aligned}
\end{equation}
Moreover,

\begin{equation}\label{B1}
\begin{aligned}
&\ln r+A+ \frac{B}{r^2}
\\&=\ln r+A+\frac{1}{2}+\frac{B}{r^2}-\frac{1}{2}
\\ &\sim -\frac{R_1R_2}{R_1+R_2}+\frac{1}{r^2}\frac{R_{1}^{2}R_{2}^{2}(\ln R_{2}-\ln R_{1})+R_{2}^{4}R_{1}-R_{1}^{4}R_{2}}{R_{2}^{2}-R_{1}^{2}}-\frac{1}{2}
\\ &=-\frac{R_1R_2}{R_1+R_2}+\frac{R^2_2}{r^2}
\frac{R_{1}^{2}(\ln R_{2}-\ln R_{1})}{R_{2}^{2}-R_{1}^{2}}+\frac{R_{1}R_{2}}{r^2}\frac{R^3_2-R^3_1}{R_{2}^{2}-R_{1}^{2}}
-\frac{1}{2}
\\ &\sim -\frac{R_1R_2}{R_1+R_2}+\frac{1}{2}
+\frac{R_{2}^{2}+R_1R_2+R_{1}^{2}}{R_{2}^{2}-R_{1}^{2}}-\frac{1}{2}
\\ &\sim\frac{2R_1R_2}{R_1+R_2}.
\end{aligned}
\end{equation}

Under the assumption (\ref{26}), we can neglect the terms containing $r^{-n}(n\geq 1)$ in (\ref{25}). Furthermore, by (\ref{A1}) and (\ref{B1}), we have
\begin{eqnarray}
\label{29} \left\{
   \begin{aligned}
 &\frac{\partial u_{r}}{\partial t}+(u \cdot \nabla)u_{r}=\Delta u_{r}-\frac{\partial p}{\partial r}+\bigg(\frac{\partial p}{\partial \theta}\bigg)_{0}\frac{2R_{1}R_{2}}{R_{1}+R_{2}}u_{\theta}
 \\&\frac{\partial u_{\theta}}{\partial t}+(u \cdot \nabla)u_{\theta}=\Delta u_{\theta}+\bigg(\frac{\partial  p}{\partial \theta}\bigg)_{0}\frac{R_{1}R_{2}}{R_{1}+R_{2}}u_{r},
 \\& \frac{\partial u_{z}}{\partial t}+(u \cdot \nabla)u_{z}=\Delta  u_{z}-\frac{\partial p}{\partial z},
 \\& \frac{\partial u_{r}}{\partial r}+\frac{\partial u_{z}}{\partial z}=0,
\end{aligned}
\right.
\end{eqnarray}
where $\Delta $ is defined as follows
\begin{equation*}
  \Delta =\frac{\partial^{2}}{\partial r^{2}}+\frac{\partial^{2}}{\partial z^{2}}.
\end{equation*}

Let $u_{r}=u'_{r}$, $u_{\theta}=\frac{1}{\sqrt{2}}u'_{\theta}$, $u_{z}=u'_{z}$, $p=p'$. Omitting the primes, we can rewrite the equations (\ref{29}) as follows
\begin{eqnarray}
\label{210} \left\{
   \begin{aligned}
 &\frac{\partial u_{r}}{\partial t}=\Delta  u_{r}-\frac{\partial p}{\partial r}+\bigg(\frac{\partial p}{\partial \theta}\bigg)_{0}\frac{\sqrt{2}R_{1}R_{2}}{R_{1}+R_{2}}u_{\theta}-(u \cdot \nabla)u_{r}
 \\&\frac{\partial u_{\theta}}{\partial t}=\Delta u_{\theta}+\bigg(\frac{\partial  p}{\partial \theta}\bigg)_{0}\frac{\sqrt{2}R_{1}R_{2}}{R_{1}+R_{2}}u_{r}-(u \cdot \nabla)u_{\theta}
 \\& \frac{\partial u_{z}}{\partial t}=\Delta u_{z}-\frac{\partial p}{\partial z}-(u \cdot \nabla)u_{z}
 \\& \frac{\partial u_{r}}{\partial r}+\frac{\partial u_{z}}{\partial z}=0,
\end{aligned}
\right.
\end{eqnarray}
\begin{remark}
It should be pointed out that 
the existence of global weak solution, the regularity of global weak solution and the existence of global attractor for the simplified model
(\ref{210}) under various boundary conditions has been obtained in our other submitted papers.
\end{remark}

\subsection{Abstract operator form}
For convenience, we denote $\lambda= \bigg(\frac{\partial  p}{\partial\theta}\bigg)_{0}\frac{\sqrt{2}R_{1}R_{2}}{R_{1}+R_{2}}$.
In this paper, we only consider the following free boundary conditions:
 \begin{eqnarray}
\label{211} \left\{
\begin{aligned}
&u_{r}=0,~u_{\theta}=0,~\frac{\partial u_{z}}{\partial r}=0,~~\mathrm{when}~ r=r_{1},~r=r_{2},
\\& \frac{\partial u_{r}}{\partial r}=\frac{\partial u_{\theta}}{\partial z}=0,~u_{z}=0,~~ \mathrm{when}~ z=0,L.
\end{aligned}
\right.
\end{eqnarray}
where $r_1=\frac{R_{1}}{l}$ and $r_2=\frac{R_{2}}{l}$ are non-dimensionless constants.

Let
 \begin{eqnarray}
\label{212}
\begin{aligned}
&H=\bigg\{u=((u_{r},u_{z}),u_{\theta})\in H^{2}(\Omega,\mathbb{R}^2)\times H^2(\Omega)~|~u ~ \mathrm{satifies}~(\ref{211})
\\ &\qquad\qquad\qquad\qquad\qquad\qquad\mathrm{and}~\frac{\partial u_{r}}{\partial r}+\frac{\partial u_{z}}{\partial z}=0\bigg\},
\\& H_1=\bigg\{u=((u_{r},u_{z}),u_{\theta})\in L^{2}(\Omega,\mathbb{R}^2)\times L^2(\Omega)~|~u ~ \mathrm{satifies}~(\ref{211})
\\ &\qquad\qquad\qquad\qquad\qquad\qquad\mathrm{and}~\frac{\partial u_{r}}{\partial r}+\frac{\partial u_{z}}{\partial z}=0\bigg\}.
\end{aligned}
\end{eqnarray}

Define the mappings $A,B,G:H\rightarrow H_{1}$ as follows
 \begin{eqnarray*}
\begin{aligned}
&Au=-\big(P(\Delta u_{r},\Delta u_{z}),\Delta u_{\theta}\big),
\\&Bu=\big(P( u_{\theta},0),u_{r}\big),
\\&Gu=\big(P((u\cdot\nabla)u_{r},(u\cdot\nabla)u_{z}),(u\cdot\nabla)u_{\theta}\big),
\end{aligned}
\end{eqnarray*}
where $P$ is the Leray projection.

Setting $L_{\lambda}=-A+\lambda B $, then the equation (\ref{210}) is equivalent to the following operator equation
 \begin{eqnarray}
\label{213} \left\{
\begin{aligned}
&\frac{du}{dt}=L_{\lambda}u+Gu,\quad t>0
\\& u_{0}=(u_{r0},u_{z0},u_{\theta0})
\end{aligned}
\right.
\end{eqnarray}
where $u_{0}$ is the initial value of (\ref{210}).

\section{Principle of exchange of stability}
Let the eigenvalue of operator $L_{\lambda}$ and the corresponding eigenvector be $\beta$ and $v=(v_{r},v_{\theta},v_{z}) $ respectively such that
\begin{equation}\label{2141}
  L_{\lambda}v=\beta v.
\end{equation}
Obviously, (\ref{2141}) is equivalent to
 \begin{eqnarray}
\label{215} \left\{
\begin{aligned}
&\Delta v_{r}+\lambda v_{\theta}-\frac{\partial p}{\partial r}=\beta v_{r},
\\&\Delta v_{\theta}+\lambda v_{r}=\beta v_{\theta},
\\& \Delta v_{z}-\frac{\partial p}{\partial z}=\beta v_{z},
\\&\frac{\partial v_{r}}{\partial r}+\frac{\partial v_{z}}{\partial z}=0.
\end{aligned}
\right.
\end{eqnarray}
Clearly, $v_{r},~v_{\theta}\in W_{1}$ and $v_{z}\in W_{2}$ respectively, where
\begin{eqnarray*}
\begin{aligned}
&W_{1}=\bigg\{\psi \in H^{2}(\Omega)~|~\psi=0,\mathrm{when}~r=r_{1},r_{2};~\frac{\partial \psi }{\partial z}=0,\mathrm{when}~z=0,L\bigg\},
\\&W_{2}=\bigg\{\psi \in H^{2}(\Omega)~|~\frac{\partial \psi }{\partial r}=0,\mathrm{when}~r=r_{1},r_{2};~\psi =0,\mathrm{when}~z=0,L\bigg\}.
\end{aligned}
\end{eqnarray*}

We know that
\begin{equation*}
  \Delta: W_{1}\rightarrow S_{1}
\end{equation*}
and
\begin{equation*}
  \Delta: W_{2}\rightarrow S_{2}
\end{equation*}
are isomorphic, where
\begin{eqnarray*}
\begin{aligned}
&S_{1}=\bigg\{\psi\in L^{2}(\Omega)~|~\psi=0,\mathrm{when}~r=r_{1},r_{2};~\frac{\partial \psi }{\partial z}=0,\mathrm{when}~z=0,L\bigg\},
\\&S_{2}=\bigg\{\psi \in L^{2}(\Omega)~|~\frac{\partial \psi }{\partial r}=0,\mathrm{when}~r=r_{1},r_{2};~\psi =0,\mathrm{when}~z=0,L\bigg\}.
\end{aligned}
\end{eqnarray*}
Noticing that the eigenvectors $e_{mn}$ and $\overline{e}_{mn}$ of $\Delta$  in $W_{1}$ and $W_{2}$  are the  orthogonal basis of $W_{1}$ and $W_{2}$ respectively. The eigenvalues $\lambda_{mn}$ and eigenvectors $e_{mn}$ of $\Delta$ in $W_{1}$ are as follows
\begin{equation*}
  \lambda_{mn}=-\pi^{2}\bigg(\frac{m^{2}}{L^{2}}+n^{2}\bigg), \quad e_{mn}=\cos\frac{m\pi z}{L}\sin n\pi(r-r_{1}).
\end{equation*}
The eigenvalues $\overline{\lambda}_{mn}$ and eigenvectors $\overline{e}_{mn}$ of $\Delta$ in $W_{2}$ are as follows
\begin{equation*}
  \overline{\lambda}_{mn}=\lambda_{mn}, \quad \overline{e}_{mn}=\sin\frac{m\pi z}{L}\cos n\pi(r-r_{1}),
\end{equation*}
where $m,n=1,2,\cdots$.

Hence, $v_{r},v_{\theta}$ are linear combinations of the eigenvectors $e_{mn}$ and $v_{z}$ is linear combination of the eigenvectors $\overline{e}_{mn}$, that is
\begin{eqnarray}\label{combination}
\begin{aligned}
&v_{r}=\sum_{m,n=1}^{\infty}a_{r}^{mn}e_{mn},
\\&v_{\theta}=\sum_{m,n=1}^{\infty}a_{\theta}^{mn}e_{mn},
\\&v_{z}=\sum_{m,n=1}^{\infty}a_{z}^{mn}\overline{e}_{mn}.
\end{aligned}
\end{eqnarray}

Substituting (\ref{combination}) into (\ref{215}) to get
\begin{eqnarray}\label{combination1}
\left\{
\begin{aligned}
&\sum_{m,n=1}^{\infty}(a_{r}^{mn}\lambda_{mn}e_{mn}+\lambda a_{\theta}^{mn}e_{mn})-\frac{\partial p}{\partial r}=\beta\sum_{m,n=1}^{\infty}a_{r}^{mn}e_{mn},
\\&\sum_{m,n=1}^{\infty}(a_{\theta}^{mn}\lambda_{mn}e_{mn}+\lambda a_{r}^{mn}e_{mn})=\beta\sum_{m,n=1}^{\infty}a_{\theta}^{mn}e_{mn},
\\&\sum_{m,n=1}^{\infty}a_{z}^{mn}\lambda_{mn}\overline{e}_{mn}-\frac{\partial p}{\partial z}=\beta\sum_{m,n=1}^{\infty}a_{z}^{mn}e_{mn},
\\&\sum_{m,n=1}^{\infty}\big(n\pi a_{r}^{mn}\cos\frac{m\pi z}{L}\cos n\pi(r-r_{1})
+\frac{m\pi}{L}a_{z}^{mn}\cos\frac{m\pi z}{L}\cos n\pi(r-r_{1})\big)
=0.
\end{aligned}
\right.
\end{eqnarray}
Eliminating $p$ of (\ref{combination1}) to get
\begin{eqnarray}\label{combination2}
\left\{
\begin{aligned}
&-\sum_{m,n=1}^{\infty}\big(a_{r}^{mn}\lambda_{mn}+\lambda a_{\theta}^{mn}\big)\frac{m\pi}{L}\sin\frac{m\pi z}{L}\sin n\pi(r-r_{1})
\\&\qquad\qquad\qquad\qquad +\sum_{m,n=1}^{\infty}a_{z}^{mn}\lambda_{mn}n\pi \sin\frac{m\pi z}{L}\sin n\pi(r-r_{1})
\\&\qquad\qquad\qquad\qquad=-\beta\sum_{m,n=1}^{\infty}\big(a_{r}^{mn}\frac{m\pi}{L}-n\pi a_{z}^{mn}\big)\sin\frac{m\pi}{L}z\sin n\pi(r-r_{1}),
\\&\sum_{m,n=1}^{\infty}(a_{\theta}^{mn}\lambda_{mn}e_{mn}+\lambda a_{r}^{mn}e_{mn})=\beta\sum_{m,n=1}^{\infty}a_{\theta}^{mn}e_{mn},
\\&\sum_{m,n=1}^{\infty}\big(n\pi a_{r}^{mn}\cos\frac{m\pi z}{L}\cos n\pi(r-r_{1})
+\frac{m\pi}{L}a_{z}^{mn}\cos\frac{m\pi z}{L}\cos n\pi(r-r_{1})\big)=0.
\end{aligned}
\right.
\end{eqnarray}

Furthermore, we obtain
 \begin{eqnarray}\label{216}\left\{
\begin{aligned}
&-\frac{m\pi}{L}\big(a_{r}^{mn}\lambda_{mn}+\lambda a_{\theta}^{mn}\big)+ n\pi a_{z}^{mn}\lambda_{mn}=-\beta \big(a_{r}^{mn}\frac{m\pi}{L}- a_{z}^{mn}n\pi\big),
\\&a_{\theta}^{mn}\lambda_{mn}+\lambda a_{r}^{mn}=\beta a_{\theta}^{mn},
\\&na_{r}^{mn}+\frac{m}{L}a_{z}^{mn}=0,
\end{aligned}
\right.
\end{eqnarray}
Then, we can deduce from (\ref{216}) that
\begin{equation} \label{217}      
\left(                
  \begin{array}{cc}   
    \lambda_{mn} & \frac{\lambda}{1+\frac{n^{2}L^{2}}{m^{2}}} \\  
    \lambda & \lambda_{mn}\\  
  \end{array}
\right)
\left(                
  \begin{array}{c}   
   a_{r}^{mn} \\  
    a_{\theta}^{mn}\\  
  \end{array}
\right)
=\beta\left(                
  \begin{array}{c}   
   a_{r}^{mn} \\  
    a_{\theta}^{mn}\\  
  \end{array}
\right).                
\end{equation}
Let
\begin{equation*}      
A_{mn}=\left(                
  \begin{array}{cc}   
    \lambda_{mn} & \frac{\lambda}{1+\frac{n^{2}L^{2}}{m^{2}}} \\  
    \lambda & \lambda_{mn}\\  
  \end{array}
\right),
~~X_{mn}=\left(                
  \begin{array}{c}   
   a_{r}^{mn} \\  
    a_{\theta}^{mn}\\  
  \end{array}
\right),               
\end{equation*}
we rewrite (\ref{217}) as follows
\begin{equation}\label{218}
  A_{mn}X_{mn}=\beta X_{mn}.
\end{equation}
Obviously, the eigenvalue of (\ref{215}) is also the eigenvalue of  (\ref{218}).  It follows from (\ref{217}) that
 \begin{eqnarray}
\label{219}
&\begin{aligned}
\beta_{mn}^{1}=\lambda_{mn}+\frac{\lambda}{\sqrt{1+\frac{n^{2}L^{2}}{m^{2}}}},
\end{aligned}
\\& \label{220}
\begin{aligned}
\beta_{mn}^{2}=\lambda_{mn}-\frac{\lambda}{\sqrt{1+\frac{n^{2}L^{2}}{m^{2}}}}.
\end{aligned}
\end{eqnarray}
and the eigenvectors corresponding $\beta_{mn}^{i}(i=1,2)$ are given by
\begin{equation}\label{221}
  f_{mn}^{i}=\frac{2}{L}\bigg((-1)^{i+1}\frac{1}{\sqrt{1+\frac{n^{2}L^{2}}{m^{2}}}}e_{mn},e_{mn},
  (-1)^{i}\frac{nL}{m}\frac{1}{\sqrt{1+\frac{n^{2}L^{2}}{m^{2}}}}\overline{e}_{mn}\bigg).
\end{equation}

From (\ref{221}), it is easy to get the following properties of $f_{mn}^{i}$.
\begin{proposition}
Let $f_{mn}^{i}$ satisfy (\ref{221}), then
\begin{enumerate}
  \item
  $\{f_{mn}^{i}\}$ is an orthogonal  basis of $H$ and $H_1$, and
  \begin{eqnarray*}
  \left\{
   \begin{aligned}
   &\langle f_{mn}^{i},f_{sk}^{j}\rangle = 0, \quad(i,m,n)\neq (j,s,k),
   \\&\langle f_{mn}^{i},f_{sk}^{j}\rangle =1, \quad(i,m,n)= (j,s,k),
   \end{aligned}
   \right.
  \end{eqnarray*}
  \item
  \begin{equation*}
    \langle G(f_{mn}^{i},f_{sk}^{j}),f_{lh}^{p}\rangle = -\langle G(f_{mn}^{i},f_{lh}^{p}),f_{sk}^{j}\rangle,
  \end{equation*}
  where $ G(f_{mn}^{i},f_{sk}^{j})=(f_{mn}^{i}\cdot \nabla)f_{sk}^{j}$,
  \item
    \begin{equation*}
    \langle G(f_{mn}^{i},f_{sk}^{j}),f_{sk}^{j}\rangle = 0.
  \end{equation*}
\end{enumerate}

\end{proposition}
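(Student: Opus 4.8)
The plan is to prove the three items in turn; item~1 is where essentially all of the bookkeeping sits, and items~2 and~3 are the standard skew-symmetry identities for the convective term. For item~1 I would first check that each $f_{mn}^{i}$ lies in $H$: its $r$- and $\theta$-components are scalar multiples of $e_{mn}\in W_{1}$ and its $z$-component a multiple of $\overline{e}_{mn}\in W_{2}$, so the boundary conditions (\ref{211}) hold componentwise, and the incompressibility relation $\partial_{r}(f_{mn}^{i})_{r}+\partial_{z}(f_{mn}^{i})_{z}=0$ follows from $\partial_{r}e_{mn}=n\pi\cos\frac{m\pi z}{L}\cos n\pi(r-r_{1})$, $\partial_{z}\overline{e}_{mn}=\frac{m\pi}{L}\cos\frac{m\pi z}{L}\cos n\pi(r-r_{1})$, together with the coefficient identity $n(-1)^{i+1}+\frac{m}{L}(-1)^{i}\frac{nL}{m}=0$, which is exactly the third line of (\ref{216}). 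Orthogonality splits into two cases: for $(m,n)\neq(s,k)$ the inner product vanishes term by term because $\{e_{mn}\}$ and $\{\overline{e}_{mn}\}$ are $L^{2}(\Omega)$-orthogonal systems, as recorded before (\ref{combination}); for $(m,n)=(s,k)$ but $i\neq j$, writing $c=(1+n^{2}L^{2}/m^{2})^{-1/2}$, the $r$-, $\theta$- and $z$-components of $\langle f_{mn}^{1},f_{mn}^{2}\rangle$ contribute (up to the common prefactor and the common factor $\|e_{mn}\|^{2}=\|\overline{e}_{mn}\|^{2}$) the quantities $-c^{2}$, $+1$ and $-c^{2}n^{2}L^{2}/m^{2}$, whose sum is $1-c^{2}(1+n^{2}L^{2}/m^{2})=0$. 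The same computation with $i=j$ shows $\|f_{mn}^{i}\|^{2}$ is independent of $(i,m,n)$, and the prefactor in (\ref{221}) is precisely what normalizes it to~$1$. Completeness is read off the spectral picture already in place: any $u\in H_{1}$ expands as $u_{r},u_{\theta}\in\mathrm{span}\{e_{mn}\}$, $u_{z}\in\mathrm{span}\{\overline{e}_{mn}\}$, the constraint $\partial_{r}u_{r}+\partial_{z}u_{z}=0$ forces $n\pi a_{r}^{mn}+\frac{m\pi}{L}a_{z}^{mn}=0$ in each mode, and $\{f_{mn}^{1},f_{mn}^{2}\}$ is a basis of that two-dimensional solution set; hence $\{f_{mn}^{i}\}$ is a complete orthonormal system of $H_{1}$ and, lying in $H$, also of $H$.

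For item~2 I would first discard the Leray projection: since $P$ is the orthogonal projection onto the incompressible $(r,z)$-block and $f_{lh}^{p}\in H_{1}$ is already divergence-free, $\langle G(f_{mn}^{i},f_{sk}^{j}),f_{lh}^{p}\rangle=\sum_{\alpha\in\{r,\theta,z\}}\langle(f_{mn}^{i}\cdot\nabla)(f_{sk}^{j})_{\alpha},(f_{lh}^{p})_{\alpha}\rangle$, with $(f_{mn}^{i}\cdot\nabla)=(f_{mn}^{i})_{r}\partial_{r}+(f_{mn}^{i})_{z}\partial_{z}$. Integrating each scalar term by parts over $\Omega$ gives $\langle(u\cdot\nabla)v_{\alpha},w_{\alpha}\rangle=-\langle v_{\alpha},(u\cdot\nabla)w_{\alpha}\rangle-\langle(\nabla\cdot u)v_{\alpha},w_{\alpha}\rangle+\int_{\partial\Omega}(u\cdot n)v_{\alpha}w_{\alpha}\,dS$ with $u=f_{mn}^{i}$; the divergence term drops by incompressibility of $f_{mn}^{i}$ (item~1), and the boundary integral drops because $u\cdot n\equiv 0$ on $\partial\Omega$ --- on $r=r_{1},r_{2}$ the normal is $\pm e_{r}$ and $(f_{mn}^{i})_{r}=0$ there by (\ref{211}), and on $z=0,L$ the normal is $\pm e_{z}$ and $(f_{mn}^{i})_{z}=0$ there. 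Summing over $\alpha$ yields item~2. Item~3 is then immediate by putting $f_{sk}^{j}=f_{lh}^{p}$ in item~2, which gives $\langle G(f_{mn}^{i},f_{sk}^{j}),f_{sk}^{j}\rangle=-\langle G(f_{mn}^{i},f_{sk}^{j}),f_{sk}^{j}\rangle$; equivalently, $\langle(u\cdot\nabla)v,v\rangle=\tfrac12\int_{\Omega}(u\cdot\nabla)|v|^{2}=\tfrac12\int_{\partial\Omega}(u\cdot n)|v|^{2}\,dS-\tfrac12\int_{\Omega}(\nabla\cdot u)|v|^{2}=0$.

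The only step needing genuine care is item~1 --- verifying $f_{mn}^{i}\in H$ through the coefficient identity, and organizing the orthonormality and completeness bookkeeping (in particular pinning down the normalizing constant in (\ref{221})). Items~2 and~3 are the textbook skew-symmetry identities for the transport term, whose only hypotheses --- incompressibility and vanishing normal velocity on $\partial\Omega$ --- are built into the construction of $f_{mn}^{i}$ and into the free boundary conditions (\ref{211}), so no new obstacle appears there.
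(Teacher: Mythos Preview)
Your argument is correct and is exactly the natural verification. The paper itself gives no proof of this proposition: it is introduced with the sentence ``From (\ref{221}), it is easy to get the following properties of $f_{mn}^{i}$'' and left unproved, so your write-up is strictly more detailed than what the paper provides. Your approach --- checking membership in $H$ via the coefficient identity behind the third line of (\ref{216}), reading off orthogonality mode-by-mode from the $L^{2}$-orthogonality of $\{e_{mn}\}$ and $\{\overline{e}_{mn}\}$, and invoking the standard integration-by-parts skew-symmetry of the transport term using $\nabla\cdot f_{mn}^{i}=0$ and $f_{mn}^{i}\cdot n|_{\partial\Omega}=0$ --- is the expected one and matches how the paper uses the result later (e.g.\ the identity $\langle G(u),u\rangle=0$ in the proof of Theorem~4.1).

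One small arithmetic remark on your normalization claim: with $\|e_{mn}\|_{L^{2}(\Omega)}^{2}=\|\overline{e}_{mn}\|_{L^{2}(\Omega)}^{2}=L/4$ (the $r$-interval has length $r_{2}-r_{1}=1$), the prefactor $2/L$ in (\ref{221}) gives $\|f_{mn}^{i}\|^{2}=\frac{4}{L^{2}}\cdot\frac{L}{4}\cdot\big[1+c^{2}(1+n^{2}L^{2}/m^{2})\big]=2/L$, not $1$; the constant that normalizes to $1$ is $\sqrt{2/L}$. This looks like a typo in the paper's formula rather than a flaw in your reasoning, and it has no effect downstream since only the directions of the eigenvectors enter the bifurcation analysis.
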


\subsection{First eigenvalue and critical number}
Obviously, it is easy to check that
\begin{equation*}
  \beta_{mn}^{2}=\lambda_{mn}-\frac{\lambda}{\sqrt{1+\frac{n^{2}L^{2}}{m^{2}}}}<0, ~~\mathrm{for}~~~~\lambda> 0.
\end{equation*}
Hence, the first eigenvalue of $L_{\lambda}$ is determined by the following formulation:
\begin{equation}\label{222}
 \beta_{1}=\max_{m,n\in N}\beta_{mn}^{1}=\max_{m\in N}\big(-\pi^{2}(1+\frac{m^{2}}{L^{2}})+\frac{\lambda}{\sqrt{1+\frac{L^{2}}{m^{2}}}} \big).
\end{equation}
(\ref{222}) implies
\begin{equation}\label{223}
  \beta_{m1}^{1}=0 ~~~\mathrm{if}~ ~~ \lambda=\lambda_{m}=\pi^{2}\sqrt{1+\frac{L^{2}}{m^{2}}}\big(1+\frac{m^{2}}{L^{2}}\big).
\end{equation}
Note that $\lambda_{m}$ is the function of $m$, which can obtain the minimum $\lambda_{0}$ when $ m=m_{0}\in N $.

Let $\lambda_0=\pi^{2}\sqrt{1+\frac{L^{2}}{m^{2}_0}}\big(1+\frac{m^{2}_0}{L^{2}}\big)$. Then there exists a positive $\delta>0$ such that $\lambda_{0}-\delta<\lambda<\lambda_{0}+\delta $, and $\beta_{1}=\beta_{m_{0}1}^{1}$,
  \begin{eqnarray}\label{224}
  \left\{
  \begin{aligned}
  &\beta_{m_{0}1}^{1}\left\{
   \begin{aligned}
   &<0, \quad \lambda_{0}-\delta<\lambda<\lambda_{0},
   \\&=0,\quad \lambda=\lambda_{0},
   \\&>0,\quad \lambda_{0}<\lambda<\lambda_{0}+\delta,
   \end{aligned}
   \right.
   \\&\beta_{mn}^{i}<0, \quad (i,m,n)\neq (1,m_{0},1).
   \end{aligned}
   \right.
  \end{eqnarray}
Let
\begin{equation*}
  E_{1}=\bigcup_{k\in N}\{x\in H|\big(L_{\lambda_{0}}-\beta_{m_{0}1}^{1}(\lambda_{0})\big)^{k}x=0\},
\end{equation*}
which  means that $\text{dim}E_{1}=1$.

\section{Main conclusions}
Based on Theorem 2.3.1\cite{Ma}, we can get the following conclusions.
\begin{theorem}
For system (\ref{213}), the following conclusion hold.
\begin{enumerate}
\item if $0\leq \lambda <\lambda_{0}$, it does not have bifurcation, and the only equilibrium point $u=0$ is globally  asymptotically stable;
 \item there exists a constant $\varepsilon >0$ such that $\lambda_{0} < \lambda <\lambda_{0}+\varepsilon $, system (\ref{213}) bifurcates from $(0,\lambda_{0})$ to an attractor $\Omega_{\lambda}$, and attractor $\Omega_{\lambda}$ attracts $H_{1}/\Gamma$, where $\Gamma$ is the stable manifold $(0,0)$, which has codimension $1$ in $H_{1}$.
\end{enumerate}
\end{theorem}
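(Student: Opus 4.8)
The strategy is to apply the attractor bifurcation theorem of Ma and Wang (Theorem~2.3.1 of \cite{Ma}) to the abstract equation (\ref{213}); its two hypotheses are the principle of exchange of stability for $L_\lambda$ at $\lambda=\lambda_0$ --- already supplied by (\ref{224}) together with $\dim E_1=1$ --- and the asymptotic stability of $u=0$ at $\lambda=\lambda_0$, which I verify below. Part~(1) is settled directly: since $\{f_{mn}^{i}\}$ is an orthonormal basis of $H_1$ consisting of eigenvectors of $L_\lambda$ with eigenvalues $\beta_{mn}^{i}$ (Proposition~1 with (\ref{218})--(\ref{221})), expanding $u=\sum c_{mn}^{i}f_{mn}^{i}$ gives $\langle L_\lambda u,u\rangle=\sum\beta_{mn}^{i}(c_{mn}^{i})^{2}\le\beta_1(\lambda)\|u\|^{2}$, and for $0\le\lambda<\lambda_0$ one has $\beta_1(\lambda)<0$ by (\ref{224}); moreover $\langle Gu,u\rangle=0$ because $G$ is the advective term (cf.\ Proposition~1). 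Hence along any solution of (\ref{213}) we get $\frac{1}{2}\frac{d}{dt}\|u\|^{2}=\langle L_\lambda u,u\rangle\le\beta_1(\lambda)\|u\|^{2}$, so $\|u(t)\|\to 0$ exponentially, and the same identity at a steady state forces $u=0$. Thus for $0\le\lambda<\lambda_0$ the origin is the unique equilibrium and is globally asymptotically stable, and no bifurcation occurs; promoting the $H_1$-decay to the topology of $H$ is routine parabolic smoothing, using the global well-posedness recalled after (\ref{210}).

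For part~(2) the essential point is the stability of $u=0$ at $\lambda=\lambda_0$. By (\ref{224}) the center space is $E_1=\mathrm{span}\{e\}$ with $e:=f_{m_{0}1}^{1}$, and all remaining eigenvalues of $L_{\lambda_0}$ are strictly negative, so (\ref{213}) has a one-dimensional center manifold $u=xe+\Phi(x)$ with $\Phi=O(x^{2})$ valued in the stable subspace $E_2$. Projecting (\ref{213}) onto $E_1$ at $\lambda=\lambda_0$ gives $\dot x=\langle G(xe+\Phi),e\rangle$, whose quadratic term $x^{2}\langle G(e,e),e\rangle$ vanishes by Proposition~1(3), while solving the $E_2$-component to leading order yields $\Phi=x^{2}\Phi_2+O(x^{3})$ with $L_{\lambda_0}\Phi_2=-P_{E_2}G(e,e)$. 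Collecting the cubic terms using Proposition~1(2)--(3) --- the term $\langle G(\Phi_2,e),e\rangle$ drops out, $\langle G(e,\Phi_2),e\rangle=-\langle G(e,e),\Phi_2\rangle$, and $P_{E_1}G(e,e)=\langle G(e,e),e\rangle e=0$ so that $G(e,e)=P_{E_2}G(e,e)$ --- the reduced equation becomes
\begin{equation*}
\frac{dx}{dt}=a\,x^{3}+o(x^{3}),\qquad a=\big\langle P_{E_2}G(e,e),\,L_{\lambda_0}^{-1}P_{E_2}G(e,e)\big\rangle .
\end{equation*}
By (\ref{224}), $L_{\lambda_0}$ is negative definite on $E_2$, hence so is $L_{\lambda_0}^{-1}$, so $a\le 0$, and $a<0$ as soon as $G(e,e)\neq 0$, which one confirms by inserting the explicit $e$ from (\ref{221}) into $G$. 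Therefore $u=0$ is asymptotically stable at $\lambda=\lambda_0$.

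With both hypotheses verified, Theorem~2.3.1 of \cite{Ma} yields an $\varepsilon>0$ such that for $\lambda_0<\lambda<\lambda_0+\varepsilon$ the equation (\ref{213}) bifurcates from $(0,\lambda_0)$ to an attractor $\Omega_\lambda\subset H_1$ with $0\notin\Omega_\lambda$, which attracts $H_1\setminus\Gamma$, where $\Gamma$ is the stable manifold of $0$ and has codimension $\dim E_1=1$; the global attraction follows by combining the local statement with the dissipativity of (\ref{210}) (existence of a global attractor, recalled after (\ref{210})). The main obstacle is the stability analysis at criticality in part~(2): one must make the center-manifold expansion $\Phi=x^{2}\Phi_2+O(x^{3})$ rigorous and then determine $\mathrm{sign}(a)$, which --- thanks to the identities of Proposition~1 --- reduces to the negativity of the quadratic form $w\mapsto\langle w,L_{\lambda_0}^{-1}w\rangle$ on $E_2$.
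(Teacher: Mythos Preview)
Your argument is correct, and for part~(1) it coincides with the paper's: both use $\langle Gu,u\rangle=0$ together with the strict negativity of the spectrum for $0\le\lambda<\lambda_0$.

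For part~(2) the route is genuinely different. The paper does \emph{not} perform any center-manifold reduction in the proof of Theorem~4.1; instead it verifies the stability hypothesis of the Ma--Wang theorem by a scaling argument. At $\lambda=\lambda_0$ any bounded invariant set $\Sigma$ is asserted to lie in $E_1$, where $L_{\lambda_0}$ vanishes, so the dynamics on $\Sigma$ reduce to the purely advective equation $\partial_t u=-(u\cdot\nabla)u-\nabla p$; this equation is invariant under the parabolic rescaling $u(t)\mapsto bu(bt)$, and hence a nontrivial $\Sigma$ would produce invariant sets $b\Sigma$ of arbitrarily large diameter, contradicting the uniform boundedness supplied by the global attractor from~\cite{F}. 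This is short and yields \emph{global} asymptotic stability at criticality in one stroke, without any coefficient computation. Your approach, by contrast, reduces to the one-dimensional center manifold, eliminates the quadratic term via Proposition~1(3), and identifies the cubic coefficient as $a=\langle P_{E_2}G(e,e),\,L_{\lambda_0}^{-1}P_{E_2}G(e,e)\rangle<0$ using the negative-definiteness of $L_{\lambda_0}^{-1}$ on $E_2$. This is more laborious but also more informative: it is precisely the computation the paper postpones to the proof of Theorem~4.2 (your $-a$ is the paper's $\gamma$, up to a constant), so you have effectively fused the two proofs and obtained the supercritical pitchfork structure of $\Omega_\lambda$ for free. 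One small point to tighten: your reduction gives only \emph{local} asymptotic stability at $\lambda=\lambda_0$, so the claim that $\Omega_\lambda$ attracts all of $H_1\setminus\Gamma$ really does rely on the dissipativity you invoke at the end; you might state explicitly that the global attractor at criticality is then forced to be $\{0\}$.
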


\begin{proof}
It is obvious that the eigenvalue $\beta^i_{mn}<0$ when $0<\lambda<\lambda_0$. Hence, the conclusion 1  naturally holds.

Note that for any $u\in H$,
\begin{equation*}
  \langle G(u),u\rangle=\int_\Omega(u\cdot u)udx=-\frac{1}{2}\bigg(\frac{\partial u_r}{
  \partial r}+\frac{\partial u_z}{
  \partial z}u^2\bigg)dx=0.
\end{equation*}
Based on the results in \cite{F}, system (\ref{213}) has a global attractor in $H$, that is, all invariant sets are uniformly bounded. Then we only need to prove that $0$ is the unique invariant set as $\lambda=\lambda_{0}$. We infer from (\ref{224}) that any invariant set $\Sigma$ of system (\ref{213} ) is only in $E_{1}$. Suppose $\Sigma$ has a non-zero element, for $u=(u_{r},u_{\theta},u_{z})\in \Sigma$, we know $u$ satisfies the following equations
\begin{eqnarray*}
\left\{
   \begin{aligned}
 &\frac{\partial u_{r}}{\partial t}=-\frac{\partial p}{\partial r}-(u \cdot \nabla)u_{r}
 \\&\frac{\partial u_{\theta}}{\partial t}=-(u \cdot \nabla)u_{\theta}
 \\& \frac{\partial u_{z}}{\partial t}=-\frac{\partial p}{\partial z}-(u \cdot \nabla)u_{z}
 \\& \frac{\partial}{\partial r}(u_{r})+\frac{\partial}{\partial z}(u_{z})=0,
\end{aligned}
\right.
\end{eqnarray*}
Clearly, $bu(bt)$ satisfies the above equations as well, $bu(bt)\in b\Sigma\subset E_1$, where $b$ is any real number. This contradicts to the boundaries of $\Sigma$. Then the invariant set of the system (\ref{213}) can only be $0$, and $0$ is a globally asymptotically stable attractor.
\end{proof}

\begin{theorem}
The set $\Omega_{\lambda}$ given in theorem 4.1, which contains two equilibrium point, that is $\Omega_{\lambda}=(\Omega_{1}^{\lambda},~\Omega_{2}^{\lambda})$. And there are following conclusion:
\begin{enumerate}
  \item $\Omega_{1}^{\lambda}=a\sqrt{\beta_{m_{0}1}^{1}}f_{m_{0}1}^{1}+o\big(\sqrt{\beta_{m_{0}1}^{1}}\big)$, \quad $\Omega_{2}^{\lambda}=-a\sqrt{\beta_{m_{0}1}^{1}}f_{m_{0}1}^{1}+o\big(\sqrt{\beta_{m_{0}1}^{1}}\big)$,
      \\where $a$ is a positive real number;
  \item $H_1$ is decomposed into two open sets $U_{1}$ and $U_{2}$:
  \begin{equation*}
    H_1=\overline{U}_{1}+\overline{U}_{2},\quad U_{1}\cap U_{2}=\phi,\quad 0\in \partial U_{1}\cap \partial  U_{2},
  \end{equation*}
such that $\Omega_{i}^{\lambda}\in U_{i} $(i=1,2), and
$$\lim\limits_{t\rightarrow \infty}\|u(t,\varphi)-\Omega_{i}^{\lambda}\|=0, \quad\text{for~ any}~ \varphi\in U_{i}.$$
\end{enumerate}
\end{theorem}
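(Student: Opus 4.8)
The plan is to carry out a centre manifold reduction near $(0,\lambda_{0})$ and then read off the structure of $\Omega_{\lambda}$ from the resulting scalar equation, using the one--dimensional attractor bifurcation theory of Ma and Wang \cite{Ma}. By \eqref{224} the only eigenvalue of $L_{\lambda}$ near the imaginary axis for $\lambda$ close to $\lambda_{0}$ is the simple eigenvalue $\beta^{1}_{m_{0}1}(\lambda)$, with eigenvector spanning $E_{1}$, while all other eigenvalues stay in a fixed left half plane; hence \eqref{213} has a one--dimensional, locally attracting centre manifold $M^{c}_{\lambda}$ tangent to $E_{1}=\mathrm{span}\{f^{1}_{m_{0}1}\}$, and the bifurcated invariant set $\Omega_{\lambda}$ lies on it. Writing $u=x\,f^{1}_{m_{0}1}+\Phi(x,\lambda)$ with $\Phi$ valued in the stable subspace and $\Phi=O(x^{2})$, and projecting \eqref{213} onto $f^{1}_{m_{0}1}$ with the help of the orthonormality in the Proposition of Section~3, I obtain the reduced equation
\begin{equation*}
\dot x=\beta^{1}_{m_{0}1}(\lambda)\,x+\big\langle G\big(x f^{1}_{m_{0}1}+\Phi\big),\,f^{1}_{m_{0}1}\big\rangle .
\end{equation*}

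Next I would identify the normal form. The coefficient of $x^{2}$ in the reduced equation is $\langle G(f^{1}_{m_{0}1},f^{1}_{m_{0}1}),f^{1}_{m_{0}1}\rangle$, which vanishes by part~3 of the Proposition of Section~3; therefore the leading nonlinearity is cubic and the reduced equation is a pitchfork,
\begin{equation*}
\dot x=\beta^{1}_{m_{0}1}(\lambda)\,x+\alpha\,x^{3}+o(x^{3}),\qquad \alpha=\big\langle G(f^{1}_{m_{0}1},\Phi_{2})+G(\Phi_{2},f^{1}_{m_{0}1}),\,f^{1}_{m_{0}1}\big\rangle ,
\end{equation*}
where $\Phi_{2}=-(-L_{\lambda_{0}})^{-1}\mathcal{P}\,G(f^{1}_{m_{0}1},f^{1}_{m_{0}1})$ is the quadratic part of the centre manifold and $\mathcal{P}$ is the spectral projection onto the stable subspace. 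Applying the product-to-sum identities to the components of $f^{1}_{m_{0}1}$ shows that $G(f^{1}_{m_{0}1},f^{1}_{m_{0}1})$ excites only modes with vertical wavenumbers $0$ and $2m_{0}$, so $\Phi_{2}$ is an explicit finite combination of such modes and $\alpha$ collapses to a finite sum of elementary integrals over $\Omega$.

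I would then fix $\alpha<0$, i.e.\ supercriticality, in one of two ways: either by directly evaluating those integrals, or, more cheaply, by invoking Theorem~4.1 --- if $\alpha>0$ the pitchfork would be subcritical and would produce a pair of nontrivial steady states of \eqref{213} arbitrarily close to $0$ for $\lambda$ slightly below $\lambda_{0}$, contradicting the global asymptotic stability of $u=0$ for $0\le\lambda<\lambda_{0}$; hence $\alpha\le 0$, and the genuinely degenerate case is excluded at the next odd order by the same argument. Granting $\alpha<0$, for $\lambda_{0}<\lambda<\lambda_{0}+\varepsilon$ the reduced equation has exactly the two nonzero equilibria $x_{\pm}=\pm(\beta^{1}_{m_{0}1}/|\alpha|)^{1/2}$, both linearly stable, with $0$ repelling; lifting to $M^{c}_{\lambda}$ and using $\Phi=O(x^{2})=O(\beta^{1}_{m_{0}1})=o(\sqrt{\beta^{1}_{m_{0}1}})$ gives $\Omega_{\lambda}=\{\Omega^{\lambda}_{1},\Omega^{\lambda}_{2}\}$ with $\Omega^{\lambda}_{1,2}=\pm a\sqrt{\beta^{1}_{m_{0}1}}\,f^{1}_{m_{0}1}+o(\sqrt{\beta^{1}_{m_{0}1}})$, $a=|\alpha|^{-1/2}>0$, which is conclusion~1.

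For conclusion~2, $0$ has a one--dimensional unstable manifold (the arc of $M^{c}_{\lambda}$ joining $\Omega^{\lambda}_{1}$ and $\Omega^{\lambda}_{2}$) and, by \eqref{224}, a stable manifold $\Gamma$ of codimension one in $H_{1}$; since $\Gamma$ is an invariant codimension-one hypersurface through $0$, its complement $H_{1}\setminus\Gamma$ splits into two connected open sets $U_{1},U_{2}$ with $0\in\partial U_{1}\cap\partial U_{2}$ and $\Omega^{\lambda}_{i}\in U_{i}$. By Theorem~4.1 the attractor $\Omega_{\lambda}$ attracts $H_{1}\setminus\Gamma$, and since it consists of the two stable equilibria whose basins are separated by $\Gamma$, every trajectory starting in $U_{i}$ converges to $\Omega^{\lambda}_{i}$, giving conclusion~2. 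I expect the main obstacle to be pinning down the sign of the cubic coefficient $\alpha$: the direct route needs the explicit second-order centre manifold $\Phi_{2}$ and the feedback integral, while the soft route needs care that a nontrivial zero of the reduced vector field really corresponds to a nontrivial bounded invariant set of \eqref{213}, so that Theorem~4.1 can be brought to bear.
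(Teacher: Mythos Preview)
Your plan is correct and parallels the paper's argument closely: both reduce to a one--dimensional equation governed by the simple eigenvalue $\beta^{1}_{m_{0}1}$, show the quadratic term vanishes (via part~3 of Proposition~1), and conclude a supercritical pitchfork. The paper works with the steady state equation \eqref{225} via Lyapunov--Schmidt rather than the dynamic centre manifold, but the resulting scalar equations are the same.

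The one place you make your life harder than necessary is the sign of the cubic coefficient. You propose either an explicit computation of $\Phi_{2}$ and the feedback integral, or a soft contradiction with Theorem~4.1; the latter leaves you handling the borderline $\alpha=0$ separately. The paper bypasses all of this by using part~2 of Proposition~1 (the antisymmetry $\langle G(f,g),h\rangle=-\langle G(f,h),g\rangle$): together with part~3 it gives $\langle G(\Phi_{2},f),f\rangle=0$ and $\langle G(f,\Phi_{2}),f\rangle=-\langle G(f,f),\Phi_{2}\rangle$, so after expanding $\Phi_{2}$ in the eigenbasis the cubic coefficient collapses to
\[
\alpha=\sum_{(i,m,n)\neq(1,m_{0},1)}\frac{1}{\beta^{i}_{mn}}\,\big\langle G(f^{1}_{m_{0}1},f^{1}_{m_{0}1}),f^{i}_{mn}\big\rangle^{2},
\]
which is strictly negative because every $\beta^{i}_{mn}<0$. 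This is exactly the paper's $\gamma=-\tfrac12\alpha>0$ in \eqref{34}. So the antisymmetry identity, which you already invoked for the quadratic term, is the missing shortcut that makes supercriticality immediate and renders both of your proposed sign arguments unnecessary. For conclusion~2 the paper simply declares it follows (from Theorem~4.1 and the one--dimensional pitchfork picture); your sketch via the codimension-one stable manifold $\Gamma$ is an adequate justification.
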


\begin{proof}
We only need to prove the conclusion 1.

It is clear that the steady state equation of (\ref{213}) is
\begin{equation}\label{225}
  L_{\lambda}u+Gu=0,\quad u\in H.
\end{equation}
Let the first feature space of $L_{\lambda}$ be $E$, and the orthogonal complement of $E$ be $E_{2}$, then
\begin{equation*}
  H=E_{1}\oplus E_{2},\quad u=u_{1}+u_{2}, \quad u_{i}\in E_{i},i=1,2.
\end{equation*}
Then (\ref{225}) can be rewritten as
\begin{align}
&\label{226}   L_{\lambda}u_{1}+P_{1}G(u_{1}+u_{2})=0,
\\&\label{227} L_{\lambda}u_{2}+P_{2}G(u_{1}+u_{2})=0.
\end{align}
where $P_{1}:H\rightarrow E_{1}$ and $P_{2}:H\rightarrow E_{2}$ are projections.

Thus, (\ref{226})-(\ref{227}) are equivalent to the equations
\begin{align}
\label{228}
   &\langle L_{\lambda}u_{1},f_{m_{0}1}^{1} \rangle+\langle G(u_{1}+u_{2}),f_{m_{0}1}^{1}\rangle =0,
\\\label{229}
  & \langle L_{\lambda}u_{2},f_{mn}^{j} \rangle+\langle G(u_{1}+u_{2}),f_{mn}^{j} \rangle =0,\quad (j,m,n)\neq (1,m_{0},1),
\end{align}
where $f_{m_{0}1}^{1}$ is the first eigenvector of $L_{\lambda}$.(\ref{228}) is the finite-dimensional bifurcation equations.

For any $u\in H$, $u$ can be expressed as
\begin{equation*}
  u=u_{1}+u_{2}=y_{m_{0}1}^{1}f_{m_{0}1}^{1}+\sum_{m>1,n,i\geq 1}y_{mn}^{i}f_{mn}^{i}, \quad  y_{m_{0}1}^{1},y_{mn}^{i}\in \mathbb{R}^{1}.
\end{equation*}
Then (\ref{228}) and (\ref{229}) are equivalent to the following two equations:
\begin{align*}
   &\beta_{m_{0}1}^{1}y_{m_{0}1}^{1}+\langle G(u_{1}+u_{2}),f_{m_{0}1}^{1}\rangle =0,
\\
  & \beta_{mn}^{i}y_{mn}^{i}+\langle G(u_{1}+u_{2}),f_{mn}^{i} \rangle =0,\quad (i,m,n)\neq (1,m_{0},1),
\end{align*}
By the proposition 1 and the implicit function theorem \cite{Ma0}, we can get
\begin{align}
\label{32}
   &\beta_{m_{0}1}^{1}y_{m_{0}1}^{1}-\sum_{m>1,n,i\geq 1}y_{m_{0}1}^{1}y_{mn}^{i}\langle G(f_{m_{0}1}^{1}),f_{mn}^{i}\rangle+O(|y_{mn}^{i}|^{2}) =0,
\\\label{33}
  & y_{mn}^{i}=-\frac{1}{\beta_{mn}^{i}}(y_{m_{0}1}^{1})^{2}\langle G(f_{m_{0}1}^{1}),f_{mn}^{i}\rangle+o(|y_{m_{0}1}^{1}|^{2}).
\end{align}
 Taking (\ref{33}) into (\ref{32}), we have
\begin{equation}\label{34}
  \beta_{m_{0}1}^{1}y_{m_{0}1}^{1}-2\gamma(y_{m_{0}1}^{1})^{3}+o(|y_{m_{0}1}^{1}|^{3})=0,
\end{equation}
where
\begin{equation*}
  \gamma=\sum_{m>1,n,i\geq 1}-\frac{1}{\beta_{mn}^{i}}\langle G(f_{m_{0}1}^{1}),f_{mn}^{i}\rangle^{2}>0.
\end{equation*}
From (\ref{224}), we can infer that (\ref{34}) bifurcates two singular points from $(0,\lambda_{0})$ when $\lambda>\lambda_{0}$:
\begin{equation*}
  (y_{m_{0}1}^{1})_{1,2}=\pm\sqrt{\frac{\beta_{m_{0}1}^{1}}{\gamma}}+o(|\beta_{m_{0}1}^{1}|^{\frac{1}{2}}).
\end{equation*}
Then
\begin{equation}\label{35}
 \Omega_{1}^{\lambda}=a\sqrt{\beta_{m_{0}1}^{1}}f_{m_{0}1}^{1}
 +o(|\beta_{m_{0}1}^{1}|^{\frac{1}{2}}),~
 \Omega_{2}^{\lambda}=-a\sqrt{\beta_{m_{0}1}^{1}}f_{m_{0}1}^{1}+o(|\beta_{m_{0}1}^{1}|^{\frac{1}{2}}),
\end{equation}
\\where $a=\frac{1}{\sqrt{\gamma}}>0$.
\end{proof}

\textbf{Physical explanation of theorem 4.2}. The fact is that $R_{1},R_{2},\rho$ and $\nu$ are fixed, and we can only adjust the value of $\bigg(\frac{\partial p}{\partial \theta}\bigg)_{0}$. Then (\ref{224}) and the expression of $\lambda$ imply that the basic flow becomes unstable, and the movement of the flow in $r$ and $z$ directions will occur  if $\bigg(\frac{\partial p}{\partial \theta}\bigg)_{0}$ exceeds a certain critical value. The structure of flow in $r$ and $z$ directions is shown in Figure 1, and the number of vortexes is determined by $L$ and $m_{0}$.

\begin{figure}[H]
  \centering
  \includegraphics[width=1.0\textwidth]{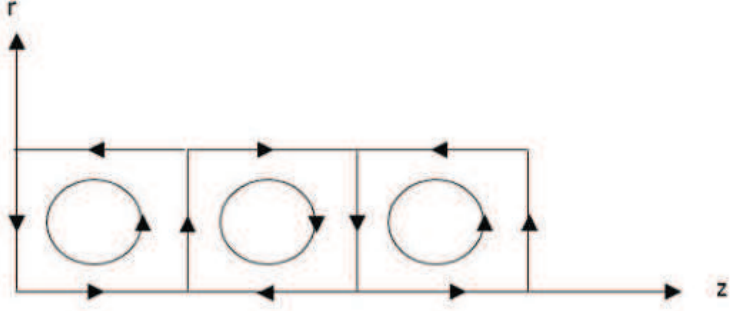}\\
  \caption{Structure}\label{1}
\end{figure}

\begin {thebibliography}{90}
\bibitem{1} F. E. Bisshopp,  Asymmetric inviscid modes of instability in Couette flow, Phys. Fluids. 6(2), 1963, 212--217.
\bibitem{C} S. Chandrasekhar,  Hydrodynamic and Hydromagnetic Stability, Dover Publications.Inc., 1981.
\bibitem{C1}  S. Chandrasekhar, The stability of inviscid flow between rotating cylinders, J. Indian Math. Soc. (N.S.) 24, 1960, 211--221.

\bibitem{2}  W. R. Dean, Fluid motion in a curved channel, Proc. Roy. soc. A. 121(787),1928, 402--420.
\bibitem{D} P. Drazin,  W. Reid, Hydrodynamic Stability, Cambridge University Press, 1981.
\bibitem{F} C. Foias, O. Manley , R. Temam,  Attractors for the B\'{e}nard problem: existence and physical bounds on their fractal dimension, Nonlinear Anal. 11, 1987, 939--967.
\bibitem{3} G. H\"{a}mmerlin,  Die stabilit\"{a}t der Str\"{o}mung in einem gekr\"{u}mmten Kanal, Arch. Rat. Mech. Anal.1(1), 1957,212--224.
\bibitem{K}Kirchg\"{a}ssner, K.Bifurcation in nonlinear hydrodynamic stability, SIAM Rev.17,1975,652--683.
\bibitem{4} E. R. Krueger , R. C. Diprima, Stability of nonrotationally symmetric disturbances for inviscid flow between flow between rotating cylinders, Phys. Fluids. 5(11), 1962, 1362--13677.
\bibitem{Ma0} T. Ma, S. Wang, Bifurcation Theory and Applications, World Scientific, 2005.
\bibitem{Ma1}  T. Ma, S. Wang, Dynamic bifurcation and stability in the Rayleigh-B\'{e}nard convection, Commun. Math. Sci. 2(2), 2004, 159--183.
\bibitem{Ma2} T. Ma, S. Wang, Rayleigh-B\'{e}nard convection:dynamics and structure in the physical space,  Commun. Math. Sci. 5(3), 2007, 553--574.
\bibitem{Ma3}  T. Ma, S. Wang, Stability and bifurcation of the Taylor problem, Arch. Ration. Mech. Anal. 181(1), 2006, 146--176.
\bibitem{Ma4} T. Ma, S. Wang, Dynamic transition and pattern formation in Taylor problem, Chin. Ann. Math. Ser.31(6), 2010, 953--974.
\bibitem{Ma} T. Ma, S. Wang, Phase transition dynamics, Springer, New York, 2013, 558pp.
\bibitem{9} L. Rayleigh, On the stability, or instability, of certain fluid motions, Proc. London Math.Soc.11(1), 1880, 57--72.
\bibitem{10} G. I. Taylor, Stability of a viscous liquid contained between two rotating cylinders, Phil. Trans. Roy. Scoc. A. 223, 1923, 289--343.
\bibitem{11} J. Walowit, S. Tsao,  R. C. DiPrima, Stability of flow between arbitrarily spaced concentric cylindrical surfaces including the effect of a radial temperature gradient,
Trans. ASME Ser. E. J. Appl. Mech., 31, 1964, 585--593.

 \bibitem{Y} V. I. Yudovich, Secondary flows and fluid instability between rotating cylinders, Appl. Math. Mech., 30,
1966, 822--833.

\end{thebibliography}

\end{document}